\newtheorem{prop}[subsection]{Proposition}
\newtheorem{lem}[subsection]{Lemma}
\newtheorem{rem}[subsection]{Remark}
\newtheorem{defin}[subsection]{Definition}
\newcommand{\Nat}{{\bf N}}
\newcommand{\Rat}{{\bf Q}}
\newcommand{\Prob}{{\bf P}}
\newcommand{\E}{{\bf E}}
\newcommand{\EE}{{\mathcal{E}}}
\newcommand{\VV}{{\mathcal{V}}}
\newcommand{\PP}{{\mathcal{P}}}
\newcommand{\cl}{\textnormal{cl}}
\newcommand{\wcl}{\textnormal{wcl}}
\newcommand{\bpm}{\begin{pmatrix}}
\newcommand{\epm}{\end{pmatrix}}
\begin{document}

\title{Evolving Shelah-Spencer Graphs}
\author{Richard Elwes}

\begin{abstract}
We define an \emph{evolving Shelah-Spencer process} as one by which a random graph grows, with at each time $\tau \in \Nat$ a new node incorporated and attached to each previous node with probability $\tau^{-\alpha}$, where $\alpha \in (0,1) \setminus \Rat$ is fixed. We analyse the graphs that result from this process, including the infinite limit, in comparison to Shelah-Spencer sparse random graphs discussed in \cite{JS} and throughout the model-theoretic  literature. The first order axiomatisation for classical Shelah-Spencer graphs comprises a \emph{Generic Extension} axiom scheme and a \emph{No Dense Subgraphs} axiom scheme. We show that in our context \emph{Generic Extension} continues to hold. While \emph{No Dense Subgraphs} fails, a weaker \emph{Few Rigid Subgraphs} property holds.
\end{abstract}

\maketitle

\section{Introduction}

Random graphs or networks are increasingly important objects of study, in both pure and applied mathematical settings. Such models can be classified into two broad categories: \emph{static} and \emph{evolving}, as observed by (among others) Kumar et al. in their influential work \cite{KRRSTU} on stochastic models of the world wide web\footnote{As observed in \cite{KRRSTU}, there is a regrettable clash in terminology with the work of Erd\H{o}s and R\'enyi who in \cite{ER} discuss the \emph{evolution} of edge density in a different but related sense.}. Models of both types take as input a collection of parameters including the number of nodes $n$, and output a random network of size $n$. The difference is in the role played by $n$. In a static model, $n$ is central throughout the process, and the output is produced directly without proceeding via graphs of smaller size. The famous Watts-Strogatz model from \cite{WS} is an example of such a process; in this case the inputs are $n$, the mean-degree $k$, and the rewiring probability $\beta$. 

In an evolving model, in contrast, the parameter $n$ (assuming it is sufficiently large) plays no role at the start of the process, and the network is constructed one node (or in some models more) at a time. The parameter $n$ provides only a stopping point for the algorithm. Preferential attachment processes, including the celebrated Barab\'asi-Albert model \cite{BA}, are inherently evolving processes in which nodes are added one at a time and connected to pre-existing nodes with probabilities depending on those nodes' current degrees.

Models of both types are of considerable interest to scientists modelling a wide variety of real-world phenomena, including the structure of the web (\cite{KRRSTU}), or biological networks of various kinds (\cite{PPP}) along with dynamical processes on such networks (a long list of examples could be provided here, including models of disease epidemics (\cite{PSV}), racial segregation (\cite{EGIS}), opinion-formation (\cite{VR}), memes within social networks (\cite{GWOL}), biological evolution (\cite{EHN})). However, in many situations evolving models have a clear advantage. After all, very few real-world networks are static.

With this dichotomy in mind, consider the classic Erd\H{os}-R\'enyi model $G(n,p)$, in which each pair of nodes is connected with an edge with probability $p$. This model (unusually) can be viewed as either static or evolving, depending on whether the nodes are all set in place and wired up simultaneously, or inserted in turn with each new node being wired to each previous node with probability $p$.

However, the mathematical properties of the resulting graphs typically hinge on the relationship between $n$ and $p$. For example, if $p \gg \frac{1}{n}$ then $G(n,p)$ will contain a triangle, while if $p \ll \frac{1}{n}$ then $G(n,p)$ will contain no triangle, each  with probability $\to 1$ as $n \to \infty$. This is just one of many \emph{threshold functions} discovered by Erd\H{o}s and R\'enyi in \cite{ER} for properties of $G(n,p)$.

It therefore makes sense, indeed is implicit in the preceding paragraph, to consider the properties of random graphs $G(n, p(n))$ where $p(n)$ is a function of $n$ rather than a fixed constant. Following Shelah and Spencer (see \cite{SS}), we shall be particularly interested in functions of the form $p(n)=n^{-\alpha}$ where $\alpha \in (0,1) \setminus \Rat$. Furthermore, as per the preceding paragraph, it is illuminating to allow $n \to \infty$ in this setting. However, $\left(G(n, p(n)) \right)_{n \in \Nat}$ is unavoidably a sequence of static random graphs: there is no natural way to derive $G(n+1, p(n+1))$ from $G(n, p(n))$. The purpose of the current work is to investigate the following related evolving process: 

\begin{defin} \label{defin:main}
\  \begin{itemize}
\item An \emph{evolving graph process} with monotonically weakly decreasing function $p: \Nat \to [0,1]$ begins with a graph $G_p(1)$ comprising a single node $u_1$. At each time $\tau \geq 2$ create a new graph $G_p(\tau)$ by adding one new node $u_\tau$ to $G_p(\tau-1)$.  Attach $u_\tau$ to each previous node $u_j$ for $j < \tau$, independently, with probability $p(\tau)$.

\item An \emph{evolving Shelah-Spencer process} is an evolving graph process with function $p(\tau) = \tau^{- \alpha}$ for some $\alpha>0$. 
\end{itemize}
\end{defin}

\begin{rem}
The main results in this paper apply to evolving Shelah-Spencer processes with $\alpha \in (0,1) \setminus \Rat$, and the results continue to apply if the initial node $u_1$ is replaced with any initial finite graph. We shall write $G(\tau)$ or $G_\alpha(\tau)$ for $G_p(\tau)$ when the meaning is obvious from context.
\end{rem}

One technique that is possible with evolving processes, and not usually with static ones, is to analyse the graph that emerges by running the process to infinity. Returning to the Erd\H{o}s-R\'enyi process $G(n,p)$ with $p$ a fixed constant, the infinite limit $G(\aleph_0,p)$ is, with probability 1, the famous \emph{Rado graph} (also known as the the \emph{Erd\H{o}s-R\'enyi graph} and simply as \emph{the random graph}). The infinite limits of certain preferential attachment processes have also been analysed, for example in \cite{KK} Kleinberg \& Kleinberg and in \cite{E1}, \cite{E2} by the author.

By definition, it is not possible directly to take the infinite limit of a static process. However, in \cite{SS} Shelah and Spencer deploy the machinery of first order logic (in the language of graphs, which is to say a single irreflexive, symmetric binary relation) to analyse the structures $G(n, n^{-\alpha})$ for fixed $\alpha \in (0,1) \setminus \Rat$. In a breakthrough result, they prove that a zero-one law applies, that is to say every first order sentence will hold in $G \left( n, n^{-\alpha} \right)$ with probability tending either to $0$ or $1$ as $n \to \infty$. They then denote by $T_{\alpha}$ the collection of those sentences which hold with probability approaching $1$. A \emph{Shelah-Spencer graph} (with parameter $\alpha$) is then defined to be a (necessarily infinite) graph which satisfies all the sentences in $T_{\alpha}$.

Shelah-Spencer graphs have proved to be important mathematical structures in a number of respects. Network-theoretically, they provide compelling examples of \emph{sparse} random graphs. Naturally-occurring networks rarely grow with the consistent density exhibited by $G(n,p)_{n \in \Nat}$ for fixed $p$. For example, the probability that two randomly selected webpages are connected by a hyperlink clearly $\to 0$ as $n \to \infty$. Likewise, in an infinite Shelah-Spencer graph, a randomly selected pair of nodes will almost certainly not be joined with an edge.

Shelah-Spencer graphs, and assorted closely related structures, have also occupied a central place in model-theoretic discussion in recent decades. The zero-one law described above implies that $T_{\alpha}$ is a \emph{complete} first order theory. This is the first of many deep logical discoveries about these structures. They are also known to be strictly stable, not finitely axiomatizable, and nearly model complete. They are also closely related to the famous Hrushovski constructions discovered in \cite{EH} which have provided counterexamples to a number of deep model-theoretic conjectures. We shall not delve further into these matters but refer to \cite{BS} for further discussion. 

Shelah-Spencer graphs are inherently infinitary in that they are  not the limits of any known natural evolving procedure. In particular, the outcome of running to infinity the evolving model described in Definition \ref{defin:main} is likely to deviate from $T_{\alpha}$. The purpose of the current work is to establish the extent of that deviation. However, our results are not purely infinitary, but can equally be read as applying at all sufficiently large finite stages of the process.

\subsection{Notation and Preliminaries} \label{subsection:notation}

Given finite subgraphs $A$ and $B$ of some ambient graph $G$, we write $AB$ or $A \cup B$ to mean the induced subgraph of $G$ whose vertex set is the union of those of $A$ and $B$ (i.e including any edges joining $A$ to $B$). If $A$ and $B$ are abstract graphs (i.e. not embedded in some common $G$) then $AB$ or $A \cup B$ is simply the disjoint union of $A$ and $B$. It will be clear from context which is intended.

Given finite graphs $A$ and $B$ (of either type) we write $\VV(B/A)$ for the number of vertices in $AB \setminus A$ (i.e. vertices of $A \cup B$ less those in $A$), and $\EE(B/A)$ for the number of edges in $AB \setminus A$ (i.e. the number of edges in $A \cup B$ less those having both endpoints in $A$), and $\delta_\alpha(B/A) := \VV(B/A) - \alpha \EE(B/A)$. We refer to $B/A$ as a \emph{graph extension} or \emph{rooted graph} with $\VV(B/A)$ many vertices, $\EE(B/A)$ many edges, and \emph{predimension} $\delta_\alpha(B/A)$. (We shall omit the subscript $\alpha$ when it is clear from context.)

The purpose of this notational set-up is that it allows us to consider simultaneously the cases $A \subseteq B$ and where the vertex sets of $A$ and $B$ are disjoint.

We will write $B/A \cong K/H$ to mean that they are isomorphic as rooted graphs, that is to say there is a bijection $f:v(AB) \to v(KH)$ such that $f(v(A)) = v(H)$ and whenever $b_1, b_2 \in v(AB)$ are not both in $v(A)$ then $b_1 b_2$ is an edge in $AB$ if and only if $f(b_1)f(b_2)$ is an edge in $KH$. (The point being that we are unconcerned about the edge relationship strictly within $A$ and $H$.)

We will often be interested in the number of occurrences of a small graph extension $B/A$ within a large (or infinite) graph $G$. We will typically write $\overline{B}/\overline{A}$ for a specific isomorphic copy of $B/A$ where $\overline{A}, \overline{B} \subseteq G$.

We will write $\VV(B)$, $\EE(B)$, and $\delta_{\alpha}(B)$ for $\VV(B/ \emptyset)$, $\EE(B/ \emptyset)$, and $\delta_{\alpha}(B/ \emptyset)$. Note that $\delta_{\alpha}$ is additive:  $\delta_{\alpha}(ABC/A) = \delta_{\alpha}(ABC/AB)+\delta_{\alpha}(AB/A)$. We will also write 
$$d_{\alpha}(B/A):=\max \{ \delta_{\alpha}(B/I) : A \subseteq I \subset AB \}.$$

Recall from \cite{JS} that a finite extension $B/A$ is $\alpha$-\emph{sparse} if $\delta_{\alpha}(B/A)>0$ and $\alpha$-\emph{dense} if $\delta_{\alpha}(B/A)<0$. It is $\alpha$-\emph{safe} if $\delta_{\alpha}(I/A) \geq 0$ for all $A \subseteq I \subseteq AB$, and $\alpha$-\emph{rigid} if $\delta_{\alpha}(B/I) \leq 0$ for all for all $A \subseteq I \subseteq AB$ (i.e. if $d(B/A)<0$). Again, we shall suppress that subscript and simply refer to \emph{sparse}, \emph{dense}, \emph{safe}, \emph{rigid} extensions when $\alpha$ is clear from context.

We collect from \cite{JS} some useful elementary results:

\begin{prop} \label{prop:elem}
\begin{enumerate}[(a)]
\item Every non-safe extension $H/R$ contains a rigid subextension $S/R$.
\item If $H/R$ is neither safe nor rigid, there exists $S \subseteq H$ so that $S/R$ is rigid and $H/S$ is safe.
\item If $H/R$ is rigid and $HX \neq RX$ then $HX/RX$ is rigid.
\end{enumerate}
\end{prop}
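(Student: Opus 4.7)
My plan is to dispatch (a) and (b) together via a single minimizer argument, and then handle (c) by reducing an extension in $HX$ to an extension inside $H$.

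For parts (a) and (b), let $S$ be a subgraph of $RH$ containing $R$ that minimizes $\delta(S/R)$ (the minimum exists since there are only finitely many candidates). Non-safety of $H/R$ provides some $I$ with $\delta(I/R)<0$, forcing the minimum value to be negative, and in particular $S\neq R$. For any $R\subseteq J\subseteq RS$, additivity of $\delta$ together with the defining minimality of $S$ gives
$$\delta(S/J) = \delta(S/R) - \delta(J/R) \leq 0,$$
so $S/R$ is rigid, proving (a). For (b), the very same $S$ shows $H/S$ is safe: for any $S\subseteq K\subseteq RH$,
$$\delta(K/S) = \delta(K/R) - \delta(S/R) \geq 0$$
by minimality. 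I still need to rule out the degenerate case $S=H$; but if $S=H$ then $\delta(H/K)\leq 0$ would hold for every $R\subseteq K\subseteq H$ by the same additivity argument, contradicting non-rigidity of $H/R$.

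For (c), fix $RX\subseteq J\subseteq HX$ and let $I$ be the induced subgraph of $H$ on $v(J)\cap v(H)$, so that $R\subseteq I\subseteq H$. Because $v(X)\subseteq v(J)$ we get $v(HX)\setminus v(J)=v(H)\setminus v(I)$, hence $\VV(HX/J)=\VV(H/I)$. For the edge count, every edge of $H$ with at least one endpoint outside $I$ lies in $HX$ with that same endpoint outside $J$, so contributes to $\EE(HX/J)$; there may additionally be edges between $H\setminus I$ and $X$, so $\EE(HX/J)\geq \EE(H/I)$. Combining, and using that $H/R$ is rigid,
$$\delta(HX/J) = \VV(H/I) - \alpha \EE(HX/J) \leq \VV(H/I) - \alpha \EE(H/I) = \delta(H/I) \leq 0.$$
Since this holds for every admissible $J$, the extension $HX/RX$ is rigid.

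The only real subtlety is the bookkeeping in (c): one must be careful to observe that $X\subseteq J$ makes the vertex counts match exactly while the edge count can only increase when passing from $H/I$ to $HX/J$. Everything else is a direct application of additivity of the predimension together with the choice of a minimizer — the same trick that drives much of the Hrushovski-style amalgamation machinery referenced in the introduction.
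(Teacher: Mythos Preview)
Your argument is correct in all three parts. The paper itself gives no proof here---it simply calls these ``nice exercises'' and points to Spencer's monograph---so your minimizer argument for (a)--(b) and the intersect-with-$H$ bookkeeping for (c) are exactly the standard elementary proofs one would supply, and there is nothing further to compare.
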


\begin{proof}
These are nice exercises, or see Properties 4.1.7, 4.1.15, and 4.1.12 of \cite{JS}. \end{proof}

\begin{lem} \label{lem:safe}
The extension $B/A$ is safe if and only if $d(B/A)=\delta(B/A)$.
\end{lem}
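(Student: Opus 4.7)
The plan is to use the additivity of $\delta_{\alpha}$ recorded just before the lemma, which reduces the claim to a single algebraic manipulation together with an easy observation about the defining maximum of $d$.

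First I would fix an arbitrary intermediate subgraph $I$ with $A \subseteq I \subseteq AB$ and apply additivity to write
\[
\delta(B/A) \;=\; \delta(AB/A) \;=\; \delta(I/A) + \delta(AB/I) \;=\; \delta(I/A) + \delta(B/I).
\]
Rearranged, this gives the key equivalence
\[
\delta(I/A) \geq 0 \iff \delta(B/I) \leq \delta(B/A),
\]
valid for every such $I$. The safety of $B/A$ is literally the statement that the left-hand inequality holds for all intermediate $I$, so it is equivalent to the right-hand inequality holding for all such $I$, which is the same as $d(B/A) \leq \delta(B/A)$.

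For the other direction of the equality $d(B/A) = \delta(B/A)$, I would note that the maximum defining $d(B/A)$ always majorises $\delta(B/A)$, since $I = A$ is a permitted choice and yields $\delta(B/A)$ itself. Hence $d(B/A) \geq \delta(B/A)$ holds unconditionally. Combining this with the previous paragraph, safety is equivalent to the two inequalities $\delta(B/A) \leq d(B/A) \leq \delta(B/A)$ collapsing into the single equality $d(B/A) = \delta(B/A)$. Running the chain of equivalences backwards gives the converse implication at once: if $d(B/A) = \delta(B/A)$, then $\delta(B/I) \leq \delta(B/A)$ for every intermediate $I$, and additivity turns this into $\delta(I/A) \geq 0$.

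I do not foresee any real obstacle here; the argument is essentially bookkeeping with additivity and one trivial lower bound on $d$. The only point that needs a moment's care is the range of $I$ in the defining maximum — we need $I = A$ to be included so that $d(B/A) \geq \delta(B/A)$, and we need the full range $A \subseteq I \subseteq AB$ so that safety translates cleanly; both are immediate from the definitions as stated in \S\ref{subsection:notation}.
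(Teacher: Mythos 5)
Your argument is correct and is essentially the paper's own proof: both directions rest on the additivity identity $\delta(B/A)=\delta(I/A)+\delta(B/I)$ together with the trivial bound $d(B/A)\ge\delta(B/A)$ obtained by taking $I=A$ in the maximum. The only caveat — which applies equally to the paper's own proof — is that the maximum defining $d(B/A)$ ranges over $A\subseteq I\subsetneq AB$, so running the chain backwards does not by itself yield the $I=AB$ instance of safety, namely $\delta(AB/A)=\delta(B/A)\ge 0$; this is a quirk of the definitions as stated rather than a defect of your argument.
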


\begin{proof}
This is also essentially contained in \cite{JS}, however the notation of $\delta$ and $d$ are not used there, so we shall spell it out. Suppose first $B/A$ is safe. It is always true that $\delta(B/A) \leq d(B/A)$, so we need to establish the reverse inequality. Whenever $A \subseteq I \subseteq AB$, we have $\delta(B/A) = \delta(B/I) + \delta(I/A)$, and by safeness $\delta(I/A) \geq 0$, so $\delta(B/I) \leq \delta(B/A)$ and thus $d(B/A) \leq \delta(B/A)$ as required. 

Conversely, $\delta(I/A) = \delta(B/A) - \delta(B/I)$ and $\delta(B/I) \leq d(B/A) = \delta(B/A)$ by assumption. Thus $\delta(I/A) \geq 0$. \end{proof}

The final definition of this section (also taken from \cite{SS}) describes a graph-extension $B/A$ being embedded in larger graph $G$ in a particularly nice way:

\begin{defin}
Given finite subgraphs $A$ and $B$ of some ambient graph $G$, and $t \in \Nat$, the extension $B/A$ is  \emph{$t$-generic}, if whenever $C \subseteq G$ and $\VV(C/AB)\leq t$ and $C/AB$ is rigid, then $\EE(C/AB)=\EE(C/A)$.
\end{defin}

\section{Statement of Results}

In the current work, we describe both the infinite limit $G(\infty)$ and all sufficiently large finite graphs formed by the evolving Shelah-Spencer process from Definition \ref{defin:main}. Recall from \cite{JS} that the first order axioms of the Shelah-Spencer theory $T_{\alpha}$ come in two schema \emph{No Dense Subgraphs} and \emph{Generic Extension}, which we take in turn. Let $\mathfrak{S} \models T_{\alpha}$ be a Shelah-Spencer graph.\\

\noindent \textbf{No Dense Subgraphs:} For every dense finite graph $H$ there exists no isomorphically embedded copy of $H$ in $\mathfrak{S}$.\\

\noindent Note that by Proposition \ref{prop:elem}(a), the No Dense Subgraphs axiom is equivalent to:\\

\noindent \textbf{No Rigid Subgraphs:} For every rigid finite graph $H$ there exists no isomorphically embedded copy of $H$ in $\mathfrak{S}$.\\

This axiom fails in our setting. However, we obtain the weaker result that with probability 1 there will only be finitely many copies of each finite rigid graph in $G(\infty)$. To put this another way:

\begin{defin}
Given $r \geq 1$, a vertex is $r$\emph{-irregular} if it is contained in a rigid subgraph of size $\leq r$. 
\end{defin}

\begin{restatable}[Few Rigid Subgraphs]{thm}{denseext} \label{thm:denseext}
For each $r \geq 1$, with probability there exists $C_r>0$ so that for all $T \leq \infty$ there are at most $C_r$ many $r$-irregular vertices in $G(T)$.
\end{restatable}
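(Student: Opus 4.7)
Let $N_r(T)$ denote the number of $r$-irregular vertices in $G(T)$. The plan is to show $N_r(\infty)<\infty$ almost surely and to set $C_r:=N_r(\infty)$, since monotonicity of the process (edges persist, and rigidity is an intrinsic property of the abstract subgraph) ensures $N_r(T)\leq N_r(\infty)$ for all $T$. Each $r$-irregular vertex belongs to some rigid subgraph of size at most $r$, so
\[ N_r(\infty) \leq r \sum_{H} X_H(\infty), \]
where $H$ ranges over the finitely many isomorphism types of rigid graphs on at most $r$ vertices and $X_H(T)$ counts edge-preserving injections $V(H)\hookrightarrow V(G(T))$. It therefore suffices to prove $\E[X_H(\infty)]<\infty$ for each rigid $H$.

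Fix such $H$ on $k$ vertices with $e$ edges. For each injection $\phi:V(H)\to\{1,\dots,T\}$ the probability that every edge of $H$ appears on its image is $\prod_{\{w,w'\}\in E(H)} \max(\phi(w),\phi(w'))^{-\alpha}$. Grouping injections by the ordering $\sigma$ they induce, with $d_j^\sigma$ denoting the back-degree of the $j$-th vertex under $\sigma$, this gives
\[ \E[X_H(T)] \leq \sum_\sigma \sum_{1\leq\tau_1<\cdots<\tau_k\leq T}\ \prod_{j=1}^k \tau_j^{-\alpha d_j^\sigma}. \]
Set $\beta_j := 1-\alpha d_j^\sigma$ and $s_j := \sum_{i\leq j}\beta_i$. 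An induction on $j$ using $\sum_{y=1}^x y^c = O(x^{\max(0,c+1)})$ (valid whenever $c+1\neq 0$) bounds the inner sum by $O(T^{M_k})$, where $M_j := \max(0, M_{j-1}+\beta_j)$, $M_0:=0$, which unrolls to $M_j = s_j - \min_{0\leq i\leq j} s_i$. The borderline case $c+1=0$ (which would introduce stray $\log T$ factors) cannot arise: $c+1$ is a nontrivial $\ZZ$-linear combination of $1$ and $\alpha$, and $\alpha\notin\Rat$ by hypothesis.

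Rigidity of $H$ is precisely what forces $M_k=0$. Writing $I_i := \{v_{\sigma(1)},\ldots,v_{\sigma(i)}\}$ and using $\EE(H/I_i)=\sum_{j>i}d_j^\sigma$, one has
\[ s_k - s_i = (k-i) - \alpha\,\EE(H/I_i) = \delta_\alpha(H/I_i) \leq 0, \]
where the case $i=0$ reads $s_k = \delta_\alpha(H)\leq 0$. Hence $\min_{0\leq i\leq k} s_i = s_k$ and $M_k=0$, so $\E[X_H(T)]$ is uniformly bounded in $T$; monotone convergence gives $\E[X_H(\infty)]<\infty$, so $X_H(\infty)$ is a.s.\ finite, and summing over the finitely many types $H$ completes the proof. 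The main obstacle is the nested summation: morally, rigidity means every prefix of any chosen vertex-ordering has nonpositive predimension contribution, which is exactly what makes the iterated sums telescope to $O(1)$, and irrationality of $\alpha$ prevents logarithmic corrections from accumulating across the $k$ successive summations.
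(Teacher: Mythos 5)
Your proof is correct and follows essentially the same route as the paper: reduce to the finitely many isomorphism types of rigid graphs on at most $r$ vertices, use monotonicity in $T$, and show by a first-moment computation that the expected number of copies of each such $H$ in $G(\infty)$ is finite, because rigidity makes every exponent arising in the nested sums nonpositive (with irrationality of $\alpha$ ruling out the logarithmic borderline cases). The paper delegates this expectation bound to Proposition \ref{prop:safeext}(c) and the integral analysis of Appendix A, whereas you re-derive the needed upper bound directly via discrete sums and the $M_j$ recursion; the underlying mechanism is identical.
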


We turn to the second axiom-schema \emph{Generic Extension} for a Shelah-Spencer graph $\mathfrak{S}$, which transfers directly to our $G(\infty)$:

\begin{restatable}[Generic Extension]{thm}{genericext} \label{thm:genericext}
Suppose $H/R$ is safe and $t \geq 1$. Almost surely, for every $\bar R$ where $|\VV\left( \bar{R} \right)|=|\VV\left(R\right)|$, for all large enough $T \leq \infty$ there exists a $t$-generic copy of $\bar H / \bar R$ in $G(T)$.
\end{restatable}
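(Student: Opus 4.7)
The plan is to combine first and second moment estimates for copies of $H/R$, in the spirit of the classical Shelah-Spencer axiomatisation argument but adapted to the time-varying edge probabilities $p(\tau)=\tau^{-\alpha}$. Since the collection of candidate roots is countable, by countable intersection it suffices to fix one $\bar R\subseteq G(\infty)$ with $|\VV(\bar R)|=|\VV(R)|$, write $\tau_0$ for the largest time index appearing in $\bar R$, and establish the assertion almost surely for this $\bar R$. I would search for $t$-generic copies among those whose new vertices lie in $(\tau_0,T]$.

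Let $N(T)$ count such copies of $H/R$ over $\bar R$. Writing $v=\VV(H/R)$ and expanding the expectation as a sum over ordered placements $\tau_0<j_1<\cdots<j_v\le T$ weighted by $\prod_k j_k^{-\alpha e_k}$, where $e_k$ is the in-degree of the $k$-th new vertex into the copy, gives
\[
\E[N(T)]=\Theta\bigl(T^{\delta_\alpha(H/R)}\bigr).
\]
The exponent is strictly positive because $H/R$ is safe, $H\supsetneq R$, and $\alpha$ is irrational. A standard second-moment computation decomposes $\E[N(T)^2]$ according to the overlap $\bar S$ with $\bar R\subseteq\bar S\subseteq\bar H_1\cap\bar H_2$: each isomorphism type $S$, being a subgraph of the safe $H/R$ containing $R$, is itself safe over $R$, and the contribution of pairs with overlap type $S$ is $\Theta(T^{\delta_\alpha(S/R)+2\delta_\alpha(H/S)})=\Theta(T^{2\delta_\alpha(H/R)-\delta_\alpha(S/R)})$, strictly subdominant to $\E[N(T)]^2$ whenever $S\supsetneq R$. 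Consequently $\V[N(T)]=o(\E[N(T)]^2)$ and Chebyshev delivers $N(T)=(1+o(1))\E[N(T)]$ with probability $1-o(1)$.

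To remove the non-generic copies, note that each copy failing $t$-genericity is witnessed by a rigid $C/\bar H\bar R$ with $\VV(C/\bar H\bar R)\le t$ and at least one edge between $C$ and $\bar H\setminus\bar R$; let $K=H\cup C$ range over the finitely many isomorphism types of such combined extensions over $R$. By additivity of $\delta_\alpha$, rigidity of $C/HR$, and irrationality of $\alpha$,
\[
\delta_\alpha(K/R)=\delta_\alpha(H/R)+\delta_\alpha(C/HR)<\delta_\alpha(H/R).
\]
The same first-moment estimate, applied to each $K/R$, yields $\E[\text{copies of }K/R]=O\bigl(T^{\delta_\alpha(K/R)}\bigr)$, so the total count $M(T)$ of non-generic copies of $H/R$ in $G(T)$ satisfies $\E[M(T)]=o(\E[N(T)])$. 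Concentration of $N(T)$ combined with Markov applied to $M(T)$ then forces $N(T)-M(T)>0$ with probability $1-o(1)$, producing a $t$-generic copy in $G(T)$ with high probability.

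To upgrade this to the almost-sure statement for all sufficiently large $T\le\infty$, I would search for an \emph{eternally} $t$-generic copy in $G(\infty)$, since any such copy is automatically $t$-generic in every $G(T)$ beyond its last vertex time. For a copy $\bar H$ whose new vertices lie entirely in a block $(T_{k-1},T_k]$ with $T_k=2^k$, the expected number of offending extensions $C$ that recruit at least one vertex from $(T_k,\infty)$ is $O(T_k^{\delta_\alpha(C/HR)})\to 0$ because $\delta_\alpha(C/HR)<0$; hence each $t$-generic copy found within block $k$ remains eternally $t$-generic with probability bounded below by a uniform positive constant. The disjointness of the blocks permits a Borel-Cantelli argument to produce infinitely many eternally $t$-generic copies almost surely. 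The main technical obstacle is the second-moment control of $N(T)$: because $p$ depends on the vertex time, the overlap sums are genuinely multi-dimensional rather than separating cleanly as in the static $G(n,n^{-\alpha})$ setting, and careful asymptotic analysis is needed to confirm that every $S\supsetneq R$ contributes a strictly subdominant term. A subsidiary subtlety is decoupling the block events in the Borel-Cantelli step into near-independent trials, which can be arranged by testing each block's copies only against extensions drawn from within or shortly after that block.
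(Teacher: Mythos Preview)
Your overall strategy (first and second moments on copies of $H/\bar R$, Markov on the count of bad extensions) is close in spirit to the paper's, but two steps are not justified as written.

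The main gap is the claim that the first-moment estimate gives $\E[\text{copies of }K/\bar R]=O\bigl(T^{\delta_\alpha(K/R)}\bigr)$ for $K=H\cup C$. The sum over ordered placements weighted by $\prod_k j_k^{-\alpha e_k}$ evaluates to $\Theta\bigl(T^{d(K/R)}\bigr)$, not $\Theta\bigl(T^{\delta(K/R)}\bigr)$; the two agree only when $K/R$ is safe (Lemma~\ref{lem:safe}). But $K/R$ need \emph{not} be safe: even though $H/R$ is safe and $C/HR$ is rigid, the combined extension can have dense subextensions over $R$. This is exactly the dichotomy the paper's proof is organised around. The paper calls $K$ \emph{tight} when $KH/R$ is not safe and handles it by first passing to ``semi-generic'' copies of $H$ that avoid the almost surely finite weak closure $\wcl_{t+v}(\bar R)$; for such copies one shows that any minimally rigid tight $K$ must in fact have $K/R$ rigid, so only finitely many $\bar K/\bar R$ occur, and one then counts the \emph{safe} extension $H/\bar K\bar R$ (with $\delta\le\delta(H/R)-\alpha$) via concentration. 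Your argument collapses this distinction and thereby asserts an exponent that is not in general correct. (It is in fact true that $d(K/R)<\delta(H/R)$ when $C$ is minimally rigid with an edge to $H\setminus R$, so your conclusion could be rescued, but that inequality requires a separate argument using minimality that you have not supplied.)

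The second gap is the upgrade from ``with probability $1-o(1)$'' to ``almost surely for all large $T$''. Chebyshev gives only the former, and your Borel--Cantelli sketch on dyadic blocks does not resolve the dependence you yourself flag: all block events share $\bar R$ and the entire future of the process. The paper sidesteps this by using Kim--Vu concentration (Proposition~\ref{prop:conc}), which yields $N(T)=\Theta(T^{\delta})$ \emph{almost surely}, and by arranging (via the tight/loose split and the weak-closure restriction) that every auxiliary count is of a \emph{safe} extension, so that Kim--Vu applies throughout. If you wish to avoid Kim--Vu, a cleaner route than block Borel--Cantelli is to bound the expected number of copies of $H$ in $G(T)$ that are non-generic \emph{in $G(\infty)$}, and then note that the events ``$G(T)$ contains an eternally $t$-generic copy'' are increasing in $T$, so $\Prob[A_T]\to 1$ already gives $\Prob[\bigcup_T A_T]=1$.
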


\begin{rem} \label{rem:manydense}
The natural \emph{Few Dense Subgraphs} axiom scheme dramatically fails in our setting. Consider a dense non-rigid graph $X$. By Proposition \ref{prop:elem}(b) below, this may decomposed as a maximal rigid graph $Y$ with a safe extension $X/Y$. If there is at least one copy of $Y$ in $G(\infty)$ (which is permitted by Few Rigid Subgraphs), then by Proposition \ref{prop:safeext} below, there will be infinitely many copies of $X/Y$ (and thus of $X$) within $G(\infty)$.
\end{rem}

\section{Few Rigid Subgraphs}

The following is the main technical ingredient for our results:

\begin{prop} \label{prop:safeext}
Suppose $H/R$ is a finite graph extension with $d=d(H/R)$ and $\delta=\delta(H/R)$. Suppose $\bar R$ is a subgraph of some $G \left( \tau_0 \right)$ with $|\VV \left(\bar R\right)|=|\VV \left(R\right)|$. 
\begin{enumerate}[(a)]
\item If $H/R$ is not rigid the expected number of instances of $H/ \bar R$ contained in $G(T) \setminus G(\tau_0)$ (or equivalently in $G(T))$ is $\Theta \left(  T^{d} \right)$, where $d>0$.

\item If $H/R$ is rigid, the expected number of instances of $H/ \bar R$ contained entirely in $G(\infty) \setminus G(\tau_0)$ is $\Theta \left( \tau_0^{\delta} \right)$ (where $\delta<0$). 

\item If $H/R$ is rigid, the expected total number of instances of $H/ \bar R$ contained in $G(\infty)$ is  positive and finite, with a value depending on $H/R$ and $G \left( \tau_0 \right)$.
\end{enumerate}
\end{prop}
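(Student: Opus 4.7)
The plan is to reduce parts (a) and (b) to a single asymptotic analysis of nested sums, and handle (c) by combinatorial decomposition onto (b). I express the expected count of copies of $H/\bar R$ in $G(T) \setminus G(\tau_0)$ as $\sum_\sigma S_\sigma(T)$, where $\sigma$ ranges over orderings of the $k := \VV(H/R)$ vertices of $H$ outside $R$, and
\[
S_\sigma(T) = \sum_{\tau_0 < t_1 < \cdots < t_k \leq T} \prod_{i=1}^k t_i^{-\alpha e_i^\sigma} \left(1-t_i^{-\alpha}\right)^{n_i^\sigma},
\]
with $e_i^\sigma$ (resp.\ $n_i^\sigma$) the number of edges (resp.\ non-edges) from the $i$-th vertex of $\sigma$ to its predecessors in $\bar R$ together with positions $1, \ldots, i-1$. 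Since the $e_i^\sigma, n_i^\sigma$ are bounded and $\tau_0 \geq 2$, the non-edge factors lie in a compact sub-interval of $(0,1)$ and may be absorbed into implicit constants.

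Next I approximate $S_\sigma(T)$ by an iterated integral and carry out the integrations innermost-first, tracking running bounds of the form $J_j(t) \leq C \, T^{a_j} t^{b_j}$. The $j$-th layer amounts to $T^{a_{j-1}} \int_t^T s^{b_{j-1} - \alpha e_{k-j+1}^\sigma} ds$: if $b_{j-1} + 1 - \alpha e_{k-j+1}^\sigma > 0$ the integral diverges at the upper limit; otherwise it converges at $t$. This yields a two-regime recursion for $(a_j, b_j)$, with the invariant $a_j + b_j = \delta(H/H_{k-j}^\sigma)$, where $H_i^\sigma := R \cup \{v_1, \ldots, v_i\}$ in the ordering $\sigma$. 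A direct unrolling gives
\[
a_k = \max\!\left(0, \max_{0 \leq i \leq k-1} \delta(H/H_i^\sigma) \right),
\]
and since every $H_i^\sigma$ is a valid intermediate set for $H/R$, one concludes $a_k \leq \max(0, d(H/R))$.

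For part (a) the bound $a_k \leq d$ gives $S_\sigma(T) = O(T^d)$ uniformly in $\sigma$. For the matching lower bound I pick $\sigma^*$ whose first $|I^*| - |R|$ positions enumerate the vertices of some optimal intermediate $I^*$ with $\delta(H/I^*) = d$; restricting the integral to the region where those early vertices take times in $[\tau_0, 2\tau_0]$ and the remaining ones in $[T/2, T]$ produces a contribution of order $\tau_0^{\delta(I^*/R)} T^d = \Omega(T^d)$. For part (b) rigidity forces $\delta(H/H_i^\sigma) \leq 0$ for every $i$, hence $a_k = 0$ and the invariant gives $b_k = \delta$; a similar region-restriction argument with all times in $[\tau_0, 2\tau_0]$ supplies a matching lower bound, so $S_\sigma(\infty) = \Theta(\tau_0^\delta)$ for every ordering. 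For part (c) I decompose each copy of $H/\bar R$ in $G(\infty)$ according to the subset $S$ of vertices of $H \setminus R$ whose image lies in $G(\tau_0)$; for each $S$ and each of the finitely many embeddings of $R \cup S$ into $G(\tau_0)$ extending $\bar R$, the residual extension $H/(R \cup S)$ inherits rigidity from $H/R$, and (b) applied to it contributes a finite expected number of completions, with the total positive thanks to the $S = \emptyset$ term.

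The main technical obstacle is carrying out the single-ordering analysis cleanly and uniformly in $\tau_0$ and $T$ so that upper- and lower-bound constants in the $\Theta$-estimates align. Irrationality of $\alpha$ rules out the boundary cases $b_{j-1} + 1 - \alpha e_{k-j+1}^\sigma = 0$, so at every layer exactly one of the two regimes applies and the induction proceeds without logarithmic corrections.
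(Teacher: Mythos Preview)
Your proof is correct and follows the same overall architecture as the paper: write the expected count as a sum over vertex orderings of a nested sum, replace the sum by an iterated integral, and for part (c) decompose copies according to which vertices fall inside $G(\tau_0)$. The decomposition you use for (c), together with the observation that $H/(R\cup S)$ inherits rigidity, is exactly what the paper does.

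Where you genuinely diverge is in the analysis of the iterated integral. The paper evaluates it \emph{exactly}: Appendix~A derives a closed form $I_n=\sum_{j=0}^n C_j^n\, T^{(n-j)-(\alpha_{j+1}+\cdots+\alpha_n)}\,\tau_0^{j-(\alpha_1+\cdots+\alpha_j)}$ via an explicit recursion on the coefficients $C_j^n$, then sums over orderings to obtain the full expansion $\sum_{R\subseteq S\subseteq H} C_S\, T^{\delta(H/S)}\tau_0^{\delta(S/R)}$ and argues separately that $C_{S_d}>0$ (and $C_H>0$ in the rigid case). You instead run a two-regime upper-bound recursion that tracks only the dominant monomial $T^{a_j}t^{b_j}$, with the clean observation $a_{j+1}=\max\bigl(a_j,\delta(H/H_{k-j-1}^\sigma)\bigr)$ giving $a_k=\max\bigl(0,\max_i\delta(H/H_i^\sigma)\bigr)\le\max(0,d)$ directly; for the matching lower bounds you use region restriction rather than sign analysis of coefficients. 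Your route is shorter and more conceptual, and entirely adequate for the $\Theta$-statements claimed in the proposition; the paper's exact computation yields the finer expansion~(\ref{equation:Theta}), which is not strictly needed here but records more information about the dependence on $\tau_0$ in the non-rigid case.
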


\begin{proof}

We shall compute the expected number of copies of $H/ \overline{R}$ completely contained in $G(T) \setminus G(\tau_0)$. 

We shall deal separately with the case where $R = \emptyset$, that is to say we count occurrences of a graph $H$. First suppose $R \neq \emptyset$. Fix, temporarily, an enumeration $v_1, \ldots, v_n$ of the vertices of $H/R$ and write $e_{j+1}$ for the number of edges connecting $v_{j+1}$ to $\left\{ v_1, \ldots, v_j \right\} \cup R$. 

Given fixed $\overline{v_1  \ldots v_{n-1} R}$ in $G \left( \tau_{n-1} \right)$, the probability that a new node $u_{\tau}$ where $\tau>\tau_{n-1}$ forms a copy of  $v_n / \overline{v_1 \ldots v_{n-1} R}$ is
\begin{equation} \label{equation:edgeprob} \tau^{-\alpha e_n} \cdot \left(1 - \tau^{-\alpha} \right)^{r+n-1 - e_n}. \end{equation}

Setting, for example, $C:=\left(1 - 2^{-\alpha} \right)^{r+n-1 - e_n}$, this probability exceeds $C \cdot \tau^{- \alpha e_n}$ for all $\tau$. Thus expected number of copies of $v_n / \overline{v_1 \ldots v_{n-1} R}$ in $G(T) \setminus G \left( \tau_{n-1} \right)$  is $ \Theta \left( \sum_{\tau = \tau_{n-1}+1}^T \tau^{- \alpha e_n} \right)$. 

Iterating this, and noting that whenever $\beta > 0$ it will hold that $$\int_j^S s^{-\beta} ds \geq \sum_{s=j}^S s^{-\beta} \geq \int_{j+1}^{S+1} s^{-\beta} ds \geq 2^{-\beta} \int_{j}^S s^{-\beta} ds$$
 the expectation we are seeking is asymptotically 
\begin{equation} \label{equation:multint}
\Theta \left( \sum_{v_1, \ldots, v_n} \int_{\tau_0}^T \int_{\tau_1}^T \ldots \int_{\tau_{n-1}}^T \tau_1^{-\alpha e_1} \tau_2^{-\alpha e_2} \ldots \tau_n^{-\alpha e_n} d \tau_n \ldots d\tau_2 d\tau_1 \right)
\end{equation}
where the sum is over all enumerations $v_1, \ldots, v_n$ of $H/R$.

In the case $R= \emptyset$, similar reasoning gives:
\begin{equation} \label{equation:multint3}
\Theta \left( \sum_{v_1, \ldots, v_n} \int_{\tau_0}^T \int_{\tau_1}^T \ldots \int_{\tau_{n-1}}^T 1 \cdot \tau_2^{-\alpha e_2} \ldots \tau_n^{-\alpha e_n} d \tau_n \ldots d\tau_2 d\tau_1 \right)..
\end{equation}

We consign the analysis of these integrals to Appendix A where it is established that both expressions above are
\begin{equation} \label{equation:Theta}
\Theta \left( \sum_{R \subseteq S \subseteq H} C_S \cdot T^{\delta(H/S)} \cdot \tau_0^{\delta(S/R)} \right) \end{equation}
where $C_S$ are constants with in particular $C_{S_{d}}>0$ where $\delta \left( H/S_d \right)$ is maximal, and in the rigid case $C_H>0$.

In the rigid case, since $\delta \left(H/S \right)<0$ for all $S$, this is therefore $\Theta \left(\tau_0^{\delta(H/R)} \right)$ as $T \to \infty$ giving part (b). Otherwise, it is $\Theta \left( T^d \right)$.

We also need to consider copies of $H/\overline{R}$ which are split across $G(\tau_0)$ and $G(T) \setminus G(\tau_0)$. This is bounded above by the number of copies of $H/ S$ in $G(T) \setminus G(\tau_0)$, summed across the constant number of isomorphism types of $S/R$ where $R \subset S \subset H$ and the bounded number of $\bar{S}/ \bar{R}$ in $G(\tau_0)$. In the case where $H/R$ is not rigid, this number will in any case be $O \left(  T^{d} \right)$, giving part (a). Thus we only worry about the rigid case. Here, it is sufficient to observe that this number is non-negative and bounded in terms of $\tau_0$, giving part (c). \end{proof}

\begin{rem} \label{rem:missedpoints}
Assuming that $H/R$ is safe, Proposition \ref{prop:safeext}(a) continues to hold if we count extensions $H/R$ which avoid any finite set of vertices. This amounts to removing boundedly many points from the range of the integral in Expression (\ref{equation:multint}) or (\ref{equation:multint3}).
\end{rem}

We may now recall and prove 

\denseext*

\begin{proof}
This follows from Proposition \ref{prop:safeext}(c) since there are finitely many isomorphism types of rigid graphs of size $\leq r$, and with probability 1 finitely many copies of each in $G(\infty)$. Of course, also, the number of $r$-irregular vertices in any $G(T)$ is at most that in $G(\infty)$.
\end{proof}

\section{Generic Extensions}

Unless otherwise stated, the context for the  all the following is the infinite graph $G(\infty)$. In \cite{JS}, (a) in the following definition is an important concept. However, we shall work with the weaker notion (b):

\begin{defin}
Given $t \geq 1$ and a finite set of vertices $X$
\begin{enumerate}[(a)]
\item the \emph{$t$-closure} of $X$, denoted $\cl_t(X)$, is the minimal set $Y \supseteq X$ such that there exists no rigid extension $Z/Y$ where $\VV(Z/Y) \leq t$.
\item the \emph{weak $t$-closure} of $X$, denoted $\wcl_t(X)$, is the union of all rigid extensions $Z/X$ where $\VV(Z/X) \leq t$ (with all induced edges included).
\end{enumerate}
\end{defin}

Theorem 4.3.2 from \cite{JS} states that in their setting, given $r,t \geq 1$, there exists $K$ so that with probability $1$, for all $\bar{X} = \left( x_1, \ldots, x_r \right)$ we have $\left| \cl_t(\bar X) \right| \leq K$.

This is not apparently attainable in our context. The argument from \cite{JS} identifies $K$ so that if $\cl_t(\bar X)>K$ then $\cl_t(\bar X)$ is necessarily a dense graph, contradicting the No Dense Subgraphs axiom. As discussed in Remark \ref{rem:manydense}), this axiom fails badly in our setting, and we see no way around this obstacle. However, the following much weaker result will be sufficient:

\begin{lem} \label{lem:wcl}
Given $\bar{X}= \left( x_1, \ldots, x_r \right)$, with probability 1, $\wcl_t \left(\bar{X} \right)$ is finite.
\end{lem}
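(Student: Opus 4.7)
The plan is to apply Proposition \ref{prop:safeext}(c) to each of the finitely many rigid isomorphism types $H/R$ of size $\VV(H/R)\leq t$ over an $r$-vertex root, and then use linearity of expectation. Since $\bar X$ is finite, it is contained in $G(\tau_0)$ for $\tau_0 := \max_i x_i$. By definition, $\wcl_t(\bar X)$ is the union of $\bar X$ with the vertex sets of all rigid extensions $\bar Z / \bar X$ inside $G(\infty)$ satisfying $\VV(\bar Z / \bar X) \leq t$.

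Up to rooted isomorphism, there are only finitely many rigid extensions $H/R$ with $|\VV(R)| = r$ and $\VV(H/R) \leq t$; enumerate them $H_1/R_1, \ldots, H_k/R_k$. For each $i$, Proposition \ref{prop:safeext}(c) (applied with $\bar R = \bar X$) gives that the expected number $N_i$ of copies of $H_i / \bar X$ in $G(\infty)$ is finite. Since each such copy contributes at most $t$ vertices beyond $\bar X$ to $\wcl_t(\bar X)$, we have
$$|\wcl_t(\bar X)| \;\leq\; r + t\sum_{i=1}^k N_i,$$
so $\E\bigl[|\wcl_t(\bar X)|\bigr] \leq r + t\sum_{i=1}^k \E[N_i] < \infty$, and in particular $|\wcl_t(\bar X)|$ is almost surely finite.

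No serious obstacle remains: the substantive work has already been absorbed into Proposition \ref{prop:safeext}(c), and this lemma follows by summing finitely many finite expectations. The only mildly delicate point worth flagging is that this argument yields only almost-sure finiteness of the random variable $|\wcl_t(\bar X)|$ (with finite expectation), and does not produce a uniform bound over all $\bar X$. As the discussion preceding the lemma indicates, such a uniform bound (analogous to the one available for $\cl_t$ in the classical Shelah-Spencer setting of \cite{JS}) appears to be unattainable here, because the argument in \cite{JS} invokes the No Dense Subgraphs axiom, which fails badly in our evolving setting by Remark \ref{rem:manydense}. Hence the switch to the weaker notion $\wcl_t$ in place of $\cl_t$ is essential, and the pointwise almost-sure finiteness furnished above is the best one can reasonably expect.
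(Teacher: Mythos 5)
Your proof is correct and follows essentially the same route as the paper: the paper's own argument is precisely that there are only finitely many relevant rigid isomorphism types and the claim then follows from Proposition \ref{prop:safeext}(c). You have merely spelled out the (valid) step that finite expectation of the number of copies implies almost-sure finiteness, which the paper leaves implicit.
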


\begin{proof}
There are only finitely many isomorphism types of relevant extensions $Z/X$, and thus the result follows immediately from Proposition \ref{prop:safeext}(c).
\end{proof}

Next we need the following strengthening of Proposition \ref{prop:safeext}(a) for safe extensions:

\begin{prop} \label{prop:conc}
Suppose that $H/R$ is a safe graph extension with $d = d(H/R)$ and $\bar R$ is a tuple of size $|R|$ in $G \left( \tau_0 \right)$. Write $N(T) = N_{H/\bar R}(T)$ for the number of distinct copies of $H/\bar R$ in $G(T)$. The  probability 1, as $T \to \infty$, $N(T)  = \Theta \left( T^{d} \right)$. \end{prop}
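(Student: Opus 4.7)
My approach is the second moment method, combined with the monotonicity of $N(T)$ to upgrade a subsequential concentration statement into almost sure $\Theta(T^d)$ bounds for all $T$.

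First, since $H/R$ is safe, Lemma~\ref{lem:safe} gives $d = \delta(H/R)$, and Proposition~\ref{prop:safeext}(a) supplies $\mu(T) := \E[N(T)] = \Theta(T^d)$ with $d>0$. Write $N(T) = \sum_{\bar H} X_{\bar H}$ as a sum of indicators over the (ordered) candidate tuples, and expand
$$
\V(N(T)) \ = \ \sum_{\bar H_1,\bar H_2}\mathrm{Cov}(X_{\bar H_1}, X_{\bar H_2}),
$$
grouped by the isomorphism type of the shared subgraph $\bar S = \bar H_1 \cap \bar H_2$, with $R \subseteq S \subseteq H$. Disjoint pairs (the case $S=R$) contribute zero covariance because their defining edge sets are disjoint, so only proper intermediate $S$ matter. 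For each such $S$, imitating the integral computation inside the proof of Proposition~\ref{prop:safeext} for the pair $(\bar H_1,\bar H_2)$, the contribution to the second moment is $O\bigl(T^{\delta(S/R)}\cdot T^{2\delta(H/S)}\bigr)=O\bigl(T^{2d-\delta(S/R)}\bigr)$, using additivity of $\delta$ and $\delta(H/R)=d$.

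The main obstacle is that safeness only forces $\delta(S/R)\geq 0$. If $\delta(S/R)>0$ for every proper intermediate $S$, then one immediately gets $\V(N(T))=O(T^{2d-\eta})$ for some $\eta>0$. When $\delta(S/R)=0$ for some $S$, the raw overlap contribution is of order $T^{2d}$, matching $\mu(T)^2$, and one must cancel it against the corresponding disjoint piece in $\mu(T)^2$: the \emph{covariance}, not the full joint expectation, must be shown to be of strictly lower order. I would do this by expanding the edge-probability product $\prod(1-\tau^{-\alpha})^{\cdot}$ to one more order in the proof of Proposition~\ref{prop:safeext}, tracking both leading coefficients and the subleading integral terms, so that the $T^{2d}$ pieces of $\E[X_{\bar H_1}X_{\bar H_2}]$ and $\E[X_{\bar H_1}]\E[X_{\bar H_2}]$ cancel exactly. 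The residual is $O(T^{2d-\eta'})$ for some $\eta'>0$ determined by the next lowest power arising in the integrals. This book-keeping is the technical heart of the argument.

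Once $\V(N(T)) = O(T^{2d-\eta'})$ is in hand, Chebyshev gives $\Prob(|N(T)-\mu(T)|>\epsilon\mu(T))=O(T^{-\eta'})$. Applied along a geometric subsequence $T_k = 2^k$, this is summable and Borel--Cantelli yields $N(T_k)/\mu(T_k)\to 1$ almost surely. Finally, $N(T)$ is monotone non-decreasing in $T$ (once a copy appears it persists, since edges are never removed), and $\mu(T_k)/\mu(T_{k+1}) \to 2^{-d}\in(0,1)$. Sandwiching $N(T)$ between $N(T_k)$ and $N(T_{k+1})$ for $T \in [T_k,T_{k+1}]$ then gives $N(T) = \Theta(T^d)$ almost surely, as required.
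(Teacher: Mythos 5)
Your strategy (second moment, Chebyshev along a geometric subsequence, Borel--Cantelli, then monotonicity of $N(T)$ to interpolate) is a genuinely different route from the paper, which instead invokes the Kim--Vu polynomial concentration machinery: there one only has to verify the ratio condition $\E(N_{H/\bar I}(T))/\E(N_{H/\bar R}(T)) = O(T^{-\gamma})$ for subextensions $I$, which by additivity of $\delta$ and Lemma~\ref{lem:safe} reduces to exactly the same combinatorial fact your variance computation rests on, namely $\delta(J/R)>0$ for $R \subsetneq J \subseteq H$. Kim--Vu buys much stronger (stretched-exponential) tails and so dispenses with the subsequence-plus-monotonicity step, but your elementary version of the endgame is correct: copies are never destroyed since edges between existing vertices are immutable, so $N$ is indeed monotone, and $\mu(2^{k+1})/\mu(2^k)$ is bounded.

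Two remarks on the body of your argument. First, the ``main obstacle'' of your third paragraph is vacuous: since $\alpha$ is irrational, $\delta(S/R)=\VV(S/R)-\alpha\EE(S/R)$ can vanish only when $S=R$, so safeness forces $\delta(S/R)\geq \eta>0$ uniformly over all proper intermediate $S$, with $\eta:=\min_{R\subsetneq S\subseteq H}\delta(S/R)$. The delicate second-order cancellation you defer as ``the technical heart'' is therefore never needed, and the proof closes immediately with $\V(N(T))=O(T^{2d-\eta})$. Second, the step you do need to justify more carefully is the overlap bound itself: the expected number of pairs with intersection type $S$ is controlled by Proposition~\ref{prop:safeext}(a) applied to the free amalgam $A=H_1\cup_S H_2$ over $R$, which gives $\Theta(T^{d(A/R)})$, not automatically $\Theta(T^{\delta(A/R)})=\Theta(T^{2d-\delta(S/R)})$; the amalgam of a safe extension with itself need not be safe (e.g.\ amalgamating a star $K_{1,m}$ over its $m$ leaves yields $K_{2,m}$, which is dense for $\alpha$ near $1$ and $m$ large). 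One still has $d(A/R)<2d$ --- writing $\delta(A/I)=\delta(A/I\cup H_1R)+\delta(I\cup H_1R/I)$ bounds each summand by $d$ with equality impossible in the first --- but this short argument should be supplied rather than attributed to ``additivity of $\delta$''.
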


\begin{proof}
We have seen in Proposition \ref{prop:safeext} that $\E \left(N(T) \right) = \Theta \left( T^{d} \right)$. Thus we need to show that $N(T)$ is concentrated around its mean, for which we use the machinery developed by Kim and Vu in \cite{KV}. In particular, we shall apply Corollary 4.1.3 of that paper. 

We need to verify that there exist $K, \gamma>0$ so that for all subextensions $I/R$ and all instances $\bar I$ of $I/\bar R$, we have $$\frac{\E \left(N_{H/\bar I}(T) \right)}{\E \left(N_{H/\bar R}(T) \right)} < K \cdot  T^{-\gamma}.$$
Well, $\E \left(N_{H/\bar I}(T) \right) = \Theta \left( T^{d(H/I)} \right) = \Theta \left( T^{\delta(H/J)} \right)$ for some $I \subseteq J \subset H$ and $\E \left(N_{H/\bar R}(T) \right) = \Theta \left( T^{\delta(H/R)} \right)$ by Lemma \ref{lem:safe}, so 
$$\frac{\E \left(N_{H/\bar I}(T) \right)}{\E \left(N_{H/\bar R}(T) \right)}  = \Theta \left( T^{\delta(H/J) - \delta(H/R)}\right) = \Theta \left(T^{-\delta(J/R)} \right)$$
by the additivity of $\delta$. Since $H/R$ is safe, $\delta(J/R) > \gamma >0$ where $\gamma:= \frac{1}{2} \cdot \min_{R \subset J' \subseteq H} \left\{ \delta(J'/R) \right\}$, giving the result.
\end{proof}

\begin{rem}
Proposition \ref{prop:conc} fails if $H/R$ is non-safe. For instance, take $R = \emptyset$ and $H = A \cup B$ where $A$ is rigid and $B/A$ is safe, and $0< -d(A) < d(B/A)$. Then the expected number of occurrences of $H$ is $\Theta \left( T^{d(B/A) + d(A)} \right) \to \infty$ but there is a non-vanishing probability that all $G(T)$ will contain zero copies of $A$ and thus of $H$. This fact somewhat complicates the proof of Theorem \ref{thm:genericext} below.   
\end{rem}

Our final goal is to establish the existence of generic extensions. The following will be useful.

\begin{defin}
Suppose $H/R$ is safe. A minimally rigid extension $K/HR$ is \emph{loose over} $R$ if $KH/R$ is safe. It is \emph{tight over} $R$ otherwise.
\end{defin}

We are now in a position to recall and prove

\genericext*

\begin{proof}
Suppose $H/R$ has associated parameters $v,e, \delta$. By Lemma \ref{lem:wcl} almost certainly, $\wcl_{t+v}\left( \bar R \right)$ is finite. Thus by Lemma \ref{lem:safe}, Remark \ref{rem:missedpoints}), and Proposition \ref{prop:conc} for all large enough finite $T$, almost surely there will be $\Theta \left( T^{\delta } \right)$ many extensions $H /\bar R$ avoiding $\wcl_{t+v} \left( \bar R \right)$. We limit our consideration to these \emph{semi-generic} copies of $H$.

Now fix $\bar{R}$ and suppose $K/ H R$ is a minimally rigid extension of size $\leq t$ and that $K$ witnesses non-$t$-genericity in that $K/H R$ includes at least one edge between $K$ and $H$. Our goal is to show that almost surely in all large enough $T$, there will be $o \left( T^\delta \right)$ many copies of $H/ \bar{R}$ admitting an extension to $K$. Since there are boundedly many isomorphism types of such $K/HR$, this will establish that almost surely for all large enough $T$, there are $o \left( T^\delta \right)$ many non-generic copies of $H/\bar{R}$. 

First we consider the case where $K/HR$ is tight, that is to say $KH/R$ is not safe. We shall show that for all such tight extensions $K/R$ is rigid, meaning $\overline{K} \subseteq \wcl_t \left(\overline{R} \right)$.  By Proposition \ref{prop:elem}(a), $KH/R$ has a rigid subextension $K_1/R$. Then any $\overline{K_1} \subseteq \wcl_{t+v} \left( \bar R \right)$, and thus all semi-generic copies of $H$ are disjoint from all such $\overline{K_1}$. Combining this with the fact that $K_1/R$ is rigid, by Proposition \ref{prop:elem}(c), we see that $K_1/HR$ is rigid. But $K/HR$ is minimally rigid, so it follows that $K_1 = K$, and in particular $K/R$ is rigid as claimed.

Thus with probability 1, there will be finitely many copies of $\overline{K}/\overline{R}$ in $G(\infty)$, so we analyse now those copies of $H$ which grow entirely after all such $\overline{K}$ are completed.

Consider the extension $H / \overline{K R}$. Notice that this is safe, for if not it has a rigid sub-extension and therefore meets $\wcl_{t+v} \left( \bar R \right)$, contradicting our assumption of semi-genericity. Under the assumption that we are considering $H/R$ which fail to be $t$-generic, there is at least one edge connecting $H$ to $K$. Thus $H / K R$ has parameters $v, e', \delta'$ where $e'\geq e+1$ and $\delta' \leq \delta - \alpha$. 

Thus by Proposition \ref{prop:conc}, almost surely there are $O \left( T^{\delta - \alpha} \right)$ many copies of $H / \overline{K R}$ in $G(T)$ and thus $\Theta \left( T^{\delta} \right)$ many copies of $H / \overline{R}$ which are not which are not instances of $H / \overline{K R}$. Combining this with the fact that there are boundedly many isomorphism types $H / \overline{K R}$ and finitely many copies of $\overline{K} / \overline{R}$, and since a completed unjoined copy cannot become joined subsequently, this establishes the almost sure existence of $\Theta \left( T^{\delta} \right)$ many copies of $H$ in all $G(T)$ where $T \leq \infty$, which are both semi-generic and for which there is no tight $K/\overline{HR}$ to witness non-genericity. We restrict our attention to these copies of $H/ \overline{R}$.\\

We move onto the case is where $K/HR$ is loose, that is $KH/R$ is safe. Call $I/HR$ a \emph{partial} extension if $HR \subseteq IHR \subset KHR$ where $K/HR$ is minimally rigid and loose. Notice for all such $I$, both $I/HR$ and $IH/R$ are safe.

By Proposition \ref{prop:conc} again, for any $\overline{R}$, the number of copies of $H / \overline{R}$ in $G(T)$ is almost surely $\Theta \left( T^{\delta(H/R)} \right)$. At the same time, the number of copies of $KH / \overline{R}$ in $G\left( T \right)$ is almost surely $\Theta \left( T^{\delta(KH/R)} \right)= o\left( T^{\delta(H/R)} \right)$. Thus the number of copies of $H / \overline{R}$ in $G\left( T \right)$ which have no complete extension to $K$ in $G\left( T \right)$ is almost surely $\Theta \left( T^{\delta(H/R)} \right)$.

Furthermore, for each isomorphism type of a partial extension $I/HR$, the number of instances of $IH/\overline{R}$ in $G\left( T \right)$ is almost surely $\Theta \left( T^{\delta(IH/R)} \right)$, and thus for each $\overline{H}$, the number of instances of $I/\overline{HR}$ in $G\left( T \right)$ is almost surely $\Theta \left( T^{\delta(I/HR)} \right)$.

For each such partial extension $\overline{IHR}$, the expected number of completions to minimally rigid loose $K$ in $G \left(\infty \right)$ is $\Theta \left( T^{\delta(K/IHR)} \right) \to 0$ as $T \to \infty$. Thus the probability that $\overline{IHR}$ fails to complete to any $K$ exceeds $1 - C T^{\delta(K/IHR)} \to 1$ for some $C>0$.

The probability, given $\overline{HR}$ in $G(T)$, that all instances of $I/\overline{HR}$ in $G(T)$ subsequently fail to complete in $G(\infty)$ exceeds, for some $D>0$: 
$$\left( 1 - C T^{\delta(K/IHR)} \right)^{D T^{\delta(I/HR)}} \sim 1 - O\left( T^{\delta(K/HR)} \right) \to 1.$$

Taking the product of finitely many expressions of this type to take into account different isomorphism types of $K$ and $I$, we find that the probability that any $\overline{HR}$ fails to complete to any minimally rigid loose $K/ \overline{HR}$ tends to $1$, and hence the probability that at least one doesn't is, in the limit, $1$.
\end{proof}

\section{Further Questions}

To our knowledge, this paper represents the first time the graphs $G = G_{\alpha} (\infty)$ have been studied, and Theorems \ref{thm:denseext} and \ref{thm:genericext} represent only a starting point for investigation. So we close by mentioning some directions for further enquiry.

\begin{enumerate}
\item \label{item:irreg} Writing $\cl(A) := \bigcup_{i=1}^\infty cl_t(A)$, it is natural to investigate the probability that $\cl(\emptyset)$ is infinite, that is to ask whether the total number of irregular vertices ($t$-irregular for any $t$) is infinite. We expect that this will hold with probability $1$. This is certainly not trivial, but some finessing of the estimates in Appendix A may be enough to provide a proof of this.\\

\item \label{item:allirreg} A stronger result would be to establish that $\cl(\emptyset)$ is cofinite in $G$, or even that $\cl(\emptyset)=G$. That is to say, it is conceivable that with probability 1 every vertex is $t$-irregular for some $t$, although we expect new tools will be required to answer this question in either direction.\\

\item It important to stress that while \textbf{No Rigid Subgraphs} and \textbf{Generic Extension} provide a complete first order axiomatisation of the theory of Shelah-Spencer graphs, it is certainly not the case that \textbf{Few Rigid Subgraphs} and \textbf{Generic Extension} do so in our context. There surely cannot be any simple axiomatisation, given the variety of finite rigid graphs which may or may not arise. However, one might ask whether incorporating $\textrm{Diag} \left(\cl(\emptyset) \right)$ and stipulating No Rigid Subgraphs outside $\cl(\emptyset)$ would provide a (non-first order) axiomatisation, or whether there are other important properties to be found.\\

\item \label{item:forking} Relatedly, and as indicated in the introduction to this paper, Shelah-Spencer graphs are important in first order model theory, as examples of \emph{stable} graphs. That is to say, there is a natural notion of two finite sets of vertices being \emph{independent} over a third set, which satisfies some natural and powerful axioms. The  Erd\H{o}s-R\'enyi random graph, meanwhile, satisfies the related property of being \emph{supersimple}. It is natural to ask then, whether anything can be said about forking independence within our graphs $G$.\\

\item The theory of Shelah-Spencer graphs had essentially two separate beginnings: the work of Shelah and Spencer (\cite{SS}) as briefly described earlier, and separately the work of Baldwin and Shi \cite{BShi}, following the seminal work of Hrushovski \cite{EH}. The latter authors constructed these graphs as the limits of a variant of Fra\"iss\'e amalgamation on the class of finite sparse graphs. It was in \cite{BS} that Baldin and Shelah established the equivalence of the two approaches. One might therefore wonder whether some variant of amalgamation ``over $\cl\left(\emptyset) \right)$'' (in some sense to be determined) might function similarly in our context.\\

\item Shelah-Spencer graphs fit into the broader theory of threshold functions of graphs $G(n,p(n))$. Here a celebrated theorem Bollob\'as and Thomason in \cite{BT} states that every monotonic increasing graph property $\PP$ has a threshold function. Recall that a property of graphs $\mathcal{P}$ is \emph{monotonic increasing} if whenever $G$ and $G^*$ are graphs where $\VV(G) = \VV(G^*)$ and $\EE(G) \subseteq \EE(G^*)$, if $G \in \PP$ then also $G^* \in \PP$. Given a graph property $\PP$, and a function $p^*: \Nat \to [0,1]$ we say that $p^*$ is an \emph{threshold function} for $\PP$ if for all $p: \Nat \to [0,1]$
$$\lim_{n \to \infty} \Prob \left(G(n, p(n)) \in \PP \right) = \left\{ \begin{array}{ll}
0 & \textrm{if } \frac{p(n)}{p^*(n)} \to 0 \\
1 & \textrm{if } \frac{p(n)}{p^*(n)} \to \infty
\end{array} \right.$$

So long as $p(n)$ is a monotonically decreasing function of $n$, it is immediate by monotonicity of $\PP$ that the second part of the definition of a threshold function transfers into the evolving context. The first does not. For instance, $p^*(n) = n^{-\frac{2}{3}}$ is the threshold function for containing a clique of size 4. However, consider the evolving process with parameter $\frac{3}{4}> \frac{2}{3}$. The first four nodes already determine that the probability that $G_{\frac{3}{4}} \left( \infty \right)$ contains a 4-clique exceeds $\left(2 \cdot 3^2 \cdot 4^{3} \right)^{-\frac{3}{4}}>0$.

Nevertheless, we expect that the theory of threshold functions to continue to apply with the weaker condition ``$<1$'' replacing ``$=0$''. If so, one might investigate under what circumstances the stronger condition of ``$=0$'' applies (one might expect it to hold, for instance, in the case of conntectedness, with a threshold of $\frac{\ln n}{n}$).

\end{enumerate}

\section*{Appendix A} 

Here we analyse the integral (\ref{equation:multint}) from the proof of Proposition \ref{prop:safeext}:
\begin{equation*}
I_n \left( \tau_0, \alpha_1, \ldots, \alpha_n \right) := \int_{\tau_0}^T \int_{\tau_1}^T \ldots \int_{\tau_{n-1}}^T \tau_1^{-\alpha_1} \tau_2^{-\alpha_2} \ldots \tau_n^{-\alpha_n} d \tau_n \ldots d\tau_2 d\tau_1
\end{equation*}
aiming to show that it satisfies condition (\ref{equation:Theta}):
\begin{equation} 
\Theta \left( \sum_{R \subseteq S \subseteq H} C_S \cdot T^{\delta(H/S)} \cdot \tau_0^{\delta(S/R)} \right) \end{equation}
where $C_S$ are constants with in particular $C_{S_{d}}>0$ where $\delta \left( H/S_d \right)$ is maximal, and in the rigid case $C_H>0$. We shall then show that the conclusion applies to integral (\ref{equation:multint3}).

We treat $T$ as fixed and write $\alpha_i := \alpha e_i$. The base case is 
\begin{equation} \label{equation:base}
I_1 \left[ \tau_0, \alpha_1 \right] = \frac{T^{1 - \alpha_1}}{1 - \alpha_1} - \frac{\tau_0^{1 - \alpha_1}}{1 - \alpha_1}. \end{equation}
The relevant recurrence relation is: \begin{equation} \label{equation:intrecur}
I_{n+1}\left[ \tau_0, \alpha_1, \ldots, \alpha_{n+1} \right] = \int_{\tau_0}^T \tau_1^{-\alpha_1} \times I_n \left[\tau_1, \alpha_2, \ldots, \alpha_{n+1} \right] d \tau_1.
\end{equation}

We shall abbreviate $I_{n}\left( \tau_0, \alpha_1, \ldots, \alpha_{n} \right)$ as $I_n$ (and similarly for $C_j^n$ in the following), i.e. we suppress the variables when (and  only when) they are the canonical ones.

\begin{lem} \label{lem:CJs}
For all $n \geq 1$ and $0 \leq j \leq n$ there are constants $C_j^n = C_j^n \left[\alpha_1, \ldots \alpha_n \right] \in {\bf R}$ such that :
\begin{equation} \label{equation:cj}
I_n = \sum_{j=0}^n C_j^n \times T^{(n-j)-\left( \alpha_{j+1} + \ldots + \alpha_n \right)} \times \tau_0^{j - \left( \alpha_1 + \ldots + \alpha_j \right) }. \end{equation}
Furthermore the following recurrence relation holds:
$$C_{j+1}^{n+1} \left[\alpha_1, \ldots \alpha_{n+1} \right] = \frac{- C_j^n \left[\alpha_2, \ldots \alpha_{n+1} \right]}{  (j+1) - \left(\alpha_1 + \ldots + \alpha_{j+1} \right) }$$
\end{lem}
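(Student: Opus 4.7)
\bigskip

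\noindent \textbf{Proposal.} The plan is to prove the displayed formula and the recurrence simultaneously by induction on $n$, driven by the recurrence (\ref{equation:intrecur}) for the integrals. The base case $n=1$ is immediate from (\ref{equation:base}): read off $C_0^1 = \frac{1}{1-\alpha_1}$ and $C_1^1 = -\frac{1}{1-\alpha_1}$, which fit the template since the $j=0$ term is $T^{1-\alpha_1} \tau_0^0$ and the $j=1$ term is $T^0 \tau_0^{1-\alpha_1}$.

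For the inductive step, assume (\ref{equation:cj}) holds at level $n$ (with the arguments shifted, so applied to $[\alpha_2,\ldots,\alpha_{n+1}]$ and base point $\tau_1$) and substitute into (\ref{equation:intrecur}). This expresses $I_{n+1}$ as a sum over $j=0,\ldots,n$ of
$$C_j^n[\alpha_2,\ldots,\alpha_{n+1}] \cdot T^{(n-j)-(\alpha_{j+2}+\cdots+\alpha_{n+1})} \int_{\tau_0}^T \tau_1^{j-(\alpha_1+\cdots+\alpha_{j+1})}\, d\tau_1.$$
Evaluating the inner integral yields the antiderivative $\frac{\tau_1^{(j+1)-(\alpha_1+\cdots+\alpha_{j+1})}}{(j+1)-(\alpha_1+\cdots+\alpha_{j+1})}$ evaluated between $\tau_0$ and $T$. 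The denominator is nonzero because $\alpha \notin \mathbb Q$ forces $(j+1) - (\alpha_1 + \cdots + \alpha_{j+1})$ to be either a positive integer (if all $e_i=0$) or irrational.

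Each inner-integral evaluation splits into a $T$-upper term and a $\tau_0$-lower term carrying opposite signs. The $T$-upper contribution from the $j$-th summand contributes $T^{(n+1)-(\alpha_1+\cdots+\alpha_{n+1})} \tau_0^0$ — exactly the shape required for the $j'=0$ term in the $(n+1)$-level formula — so summing over $j$ yields
$$C_0^{n+1}[\alpha_1,\ldots,\alpha_{n+1}] = \sum_{j=0}^n \frac{C_j^n[\alpha_2,\ldots,\alpha_{n+1}]}{(j+1)-(\alpha_1+\cdots+\alpha_{j+1})}.$$
The $\tau_0$-lower contribution from the $j$-th summand has the shape $T^{(n-j)-(\alpha_{j+2}+\cdots+\alpha_{n+1})} \tau_0^{(j+1)-(\alpha_1+\cdots+\alpha_{j+1})}$, i.e. the term with new index $j' = j+1 \in \{1,\ldots,n+1\}$ in the level-$(n+1)$ formula; matching coefficients gives precisely the claimed recurrence
$$C_{j+1}^{n+1}[\alpha_1,\ldots,\alpha_{n+1}] = \frac{-C_j^n[\alpha_2,\ldots,\alpha_{n+1}]}{(j+1)-(\alpha_1+\cdots+\alpha_{j+1})}.$$

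The work is really just careful bookkeeping: the main obstacle is tracking the three simultaneous index shifts (the parameter list shifts from $[\alpha_1,\ldots,\alpha_n]$ to $[\alpha_2,\ldots,\alpha_{n+1}]$ under the IH, the summation index shifts by $1$ when one extracts the $\tau_0$-lower boundary, and the exponent of $T$ telescopes with the exponent produced by the integration). Once those are lined up, the recurrence falls out by comparing monomials in $T$ and $\tau_0$; no further estimates or analytic work are needed.
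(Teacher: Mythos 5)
Your proposal is correct and follows essentially the same route as the paper: induction via the integral recurrence (\ref{equation:intrecur}), substituting the level-$n$ formula with shifted arguments $\left[\alpha_2,\ldots,\alpha_{n+1}\right]$, and reading off the recurrence from the $\tau_0$-boundary terms while the $T$-boundary terms assemble into $C_0^{n+1}$ (which the paper records separately in Remark \ref{rem:czero}). Your added observation that the denominators are nonzero because each $(j+1)-(\alpha_1+\cdots+\alpha_{j+1})$ is either a positive integer or irrational is a small but genuine detail the paper leaves implicit.
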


\begin{proof}
Suppose inductively for some $n \geq 1$ and all $0 \leq j \leq n$ that there are suitable constants $C_j^n$. Then, writing $\left(C_j^n\right)'$ for $C_j^n \left[\alpha_2, \ldots \alpha_{n+1} \right]$,  by the recurrence relation (\ref{equation:intrecur}) we get 
\begin{align*}
I_{n+1} & = \int_{\tau_0}^T \tau_1^{\alpha_1} \times \sum_{j=0}^n \left(C_j^n\right)'  \times T^{(n-j)-\left( \alpha_{j+2} + \ldots + \alpha_{n+1} \right)} \times \tau_1^{j - \left( \alpha_2 + \ldots + \alpha_{j+1} \right) } d \tau_1 \\
& = \sum_{j=0}^n \left(C_j^n\right)' \times T^{(n-j)-\left( \alpha_{j+2} + \ldots + \alpha_{n+1} \right)} \times \int_{\tau_0}^T \tau_1^{j - \left(\alpha_1 + \alpha_2 + \ldots + \alpha_{j+1} \right) } d \tau_1 \\
& = \sum_{j=0}^n \frac{\left(C_j^n\right)' \times T^{(n+1)-\left( \alpha_1 + \ldots + \alpha_{n+1} \right)}}{ \left( j+1 \right) - \left(\alpha_1 + \alpha_2 + \ldots + \alpha_{j+1} \right) } \\
& \ \ \ \ \ \ \ \ - \sum_{j=0}^n \frac{\left(C_j^n\right)' \times T^{(n-j)-\left( \alpha_{j+2} + \ldots + \alpha_{n+1} \right) } \times \tau_0^{(j+1) - \left( \alpha_1 + \ldots \alpha_{j+1} \right)}}{ \left( j+1 \right) - \left(\alpha_1 + \alpha_2 + \ldots + \alpha_{j+1} \right) } 
\end{align*}
This establishes inductively that Equation \ref{equation:cj} is indeed the correct form, and the second sum provides the required recurrence relation for $C_j^n$. 
\end{proof}

\begin{rem} \label{rem:czero} 
The first sum in the expression above reflects the fact that $$C_0^{n+1} = - \sum_{j=1}^{n+1} C_j^{n+1}$$
which is what we expect from considering the case $\tau_0 = T$, where clearly $I_n = 0$.
\end{rem}

\begin{prop}
For all $n \geq 1$ and $0 \leq j \leq n$: 
$$C_{j}^{n} = \frac{ \left(- 1 \right)^{j}}{ \left( \prod_{k=1}^{n-j} \left(k - \left( \alpha_{j+1} + \ldots + \alpha_{j+k} \right) \right)  \right)  \left( \prod_{i=1}^j \left( i - \left( \alpha_{j+1-i} +\ldots + \alpha_{j} \right) \right)   \right)}.$$
\end{prop}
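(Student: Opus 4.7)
The plan is a straightforward induction on $n$. For the base case $n=1$, expression (\ref{equation:base}) yields $C_0^1 = 1/(1-\alpha_1)$ and $C_1^1 = -1/(1-\alpha_1)$, which match the claimed formula once one notes that empty products evaluate to $1$.

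For the inductive step, I would split into two cases according to whether $\ell \geq 1$ or $\ell = 0$ in $C_\ell^{n+1}$. When $\ell \geq 1$, I apply the recurrence of Lemma \ref{lem:CJs} and substitute the inductive formula for $C_{\ell-1}^n[\alpha_2, \ldots, \alpha_{n+1}]$. The sign $(-1)^{\ell-1}$ from $C_{\ell-1}^n$ combines with the $-1$ in the recurrence to give $(-1)^\ell$. Under the shift $\alpha_i \mapsto \alpha_{i+1}$, the outer product of the formula is preserved verbatim, and the denominator factor $\ell - (\alpha_1 + \ldots + \alpha_\ell)$ contributed by the recurrence is exactly the missing $i = \ell$ entry of the inner product. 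This is pure index bookkeeping.

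The genuine obstacle is the case $\ell = 0$, which the recurrence never produces. My preferred route is to compute $C_0^n$ directly from the integral: it is the coefficient of $T^{n - (\alpha_1 + \ldots + \alpha_n)} \tau_0^0$ in $I_n$, which the substitution $\tau_i = T u_i$ (followed by letting $\tau_0 \to 0$) expresses as $\int_0^1 \int_{u_1}^1 \cdots \int_{u_{n-1}}^1 u_1^{-\alpha_1} \cdots u_n^{-\alpha_n} \, du_n \cdots du_1$. A further substitution $v_i = u_i / u_{i+1}$ for $i < n$ and $v_n = u_n$ uncouples the simplex to the cube $(0,1)^n$; the Jacobian is upper triangular with diagonal entries $u_i / v_i$, giving $\prod_{\ell = 2}^n v_\ell^{\ell-1}$, which combines with the integrand $\prod_i u_i^{-\alpha_i} = \prod_{k=1}^n v_k^{-(\alpha_1 + \ldots + \alpha_k)}$ to produce a separable product of elementary integrals, evaluating to $\prod_{k=1}^n 1 / (k - (\alpha_1 + \ldots + \alpha_k))$ as claimed.

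The main obstacle is thus the $\ell = 0$ case. Strictly, the integral computation above requires $\alpha_1 + \ldots + \alpha_k < k$ for all $k$ to secure convergence; since both sides of the claimed identity are rational functions of the $\alpha_i$, the formula extends to all parameters by unique algebraic continuation. An alternative purely algebraic route avoids this issue: combine Remark \ref{rem:czero}'s identity $C_0^{n+1} = -\sum_{\ell = 1}^{n+1} C_\ell^{n+1}$ with the already-established formulas for $C_\ell^{n+1}$ with $\ell \geq 1$, and reduce the resulting sum via partial fractions; this stays entirely within the rational function field at the cost of more computation.
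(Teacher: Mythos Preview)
Your proof is correct and follows essentially the same route as the paper: induction via the recurrence of Lemma~\ref{lem:CJs} for $j \geq 1$, and direct evaluation of $I_n$ at $\tau_0 = 0$ to isolate $C_0^n$. The only tactical difference is in computing the resulting simplex integral: the paper simply reverses the order of integration to obtain $\int_0^T \int_0^{\tau_n} \cdots \int_0^{\tau_2} \prod_i \tau_i^{-\alpha_i}\, d\tau_1 \cdots d\tau_n$ and integrates from the inside out, whereas you map the simplex to the cube via $v_i = u_i/u_{i+1}$; both produce the product $\prod_k (k - (\alpha_1 + \cdots + \alpha_k))^{-1}$ immediately. Your explicit observation that setting $\tau_0 = 0$ requires $\alpha_1 + \cdots + \alpha_k < k$ for each $k$, together with the rational-function continuation to cover the remaining parameter values, is a point of rigor the paper passes over in silence.
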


\begin{proof}

The base case of $n=1$ and $0 \leq j \leq 1$ is established in Equation (\ref{equation:base}) above. Suppose now that the result holds for some particular $n$ and all $0 \leq j \leq n$. Then 
$$\left(C_{j}^{n} \right)' = \frac{ \left(- 1 \right)^{j}}{ \left( \prod_{k=1}^{n-j} \left(k - \left( \alpha_{j+2} + \ldots + \alpha_{j+1+k} \right) \right)  \right) \left( \prod_{i=1}^j \left( i - \left( \alpha_{j+2-i} +\ldots + \alpha_{j+1} \right) \right)   \right)}.$$
and so 
$$C_{j+1}^{n+1} = \frac{ \left(- 1 \right)^{j+1}}{ \left( \prod_{k=1}^{(n+1)-(j+1)} \left(k - \left( \alpha_{j+2} + \ldots + \alpha_{j+1+k} \right) \right)  \right)  \left( \prod_{i=1}^{j+1} \left( i - \left( \alpha_{j+2-i} +\ldots + \alpha_{j+1} \right) \right)   \right)}$$
as required.\\

Thus we are left with the case of $C^{n}_0$, which we approach from a different angle. Notice that

\begin{align*}
C^{n}_0 \times T^{n - \left( \alpha_1 + \ldots + \alpha_n \right)} &= I_{n+1}\left[0, \alpha_1, \ldots, \alpha_{n+1} \right]\\
& = \int_{0}^T \int_{\tau_1}^T \ldots \int_{\tau_{n-1}}^T \tau_1^{-\alpha_1} \tau_2^{-\alpha_2} \ldots \tau_n^{-\alpha_n} d \tau_n \ldots d\tau_2 d\tau_1\\
& = \int_{0}^T \int_0^{\tau_{n}} \ldots \int^{\tau_2}_0 \tau_1^{-\alpha_1} \tau_2^{-\alpha_2} \ldots \tau_n^{-\alpha_n} d \tau_1 d\tau_2 \ldots d \tau_n
\end{align*}
since in both cases the function $\tau_1^{-\alpha_1} \tau_2^{-\alpha_2} \ldots \tau_n^{-\alpha_n}$ is being integrated over all tuples $(\tau_1, \tau_2, \ldots, \tau_n)$ where 
$0 \leq \tau_1 \leq \tau_2 \leq \ldots \leq \tau_n  \leq T$.

From this formulation, it is an easy exercise to show that
$$C_0^n = \frac{1}{\left(1 - \alpha_1 \right)\cdot \left(2 - \left(\alpha_1 + \alpha_2 \right) \right) \cdot \ldots \cdot \left(n - \left( \alpha_1 +\ldots + \alpha_n \right) \right) }$$
as required. \end{proof}

We note in passing that combining this result with the sum for $C_0^{n+1}$ observed in Remark \ref{rem:czero} above yields an interesting and non-obvious identity.

There is one further step to complete the analysis of (\ref{equation:multint}) and conclude that when we permute the vertices $v_1, \ldots, v_n$ and sum, its output satisfies condition (\ref{equation:Theta}). We need to establish that (in the notation of (\ref{equation:Theta})), the term $C_{S_d}>0$ where $S_d$ is such that $R \subseteq S_d \subset H$ and $\delta \left( H/S_d \right) = d(H/R)$. Happily this is easy to see. The term $C_{S_d}$ arises as a sum (over different permutations of $v_1, \ldots, v_n$) of constants of the form $$C_{j}^{n} = \frac{ \left(- 1 \right)^{j}}{ \left( \prod_{k=1}^{n-j} \delta\left(A_k /S_d \right)  \right)  \left( \prod_{i=1}^j \delta \left( S_d / B_i \right)   \right)}$$
for certain $S_d \subset A_k \subseteq H$ and $R \subseteq B_i \subset S_d$. Since $\delta \left(H / S_d \right) = \delta \left(H / A_k \right) + \delta \left(A_k / S_d \right)$, by hypothesis on $S_d$ it follows that $\delta \left(A_k / S_d \right)>0$. Likewise $\delta \left(H / B_i \right) = \delta \left(H / S_d \right) + \delta \left(S_d / B_i \right)$  meaning that $\delta \left(S_d /B_i \right)<0$ and $C^n_j>0$.

By the same reasoning, in the rigid case $C^n_n>0$, meaning that after summing over permutations $v_1, \ldots, v_n$ we have $C_H>0$ as required.\\

Finally we return to integral \ref{equation:multint3}:
$$J_{n}\left[\tau_0, \alpha_2, \ldots,\alpha_{n}  \right] = \int_{\tau_0}^T \int_{\tau_1}^T \ldots \int_{\tau_{n-1}}^T 1 \cdot \tau_2^{-\alpha_2} \ldots \tau_n^{-\alpha_n} d \tau_n \ldots d\tau_2 d\tau_1.$$
We can analyse this as follows (keeping the notation from above):
\begin{align*}
& J_{n}\left[\tau_0, \alpha_2, \ldots,\alpha_{n+1}  \right] = \int_{\tau_0}^T I_{n-1} \left( \tau_1, \alpha_2, \ldots, \alpha_{n} \right) d \tau_1\\
= & \left(\left(C^{n-1}_0 \right)'+\sum_{j=1}^{n-1} \frac{\left(C^{n-1}_j \right)'}{(j+1) - \left(\alpha_2+ \ldots + \alpha_{j+1} \right)} \right) T^{n - \left(\alpha_2+ \ldots + \alpha_n \right)}\\
& \ \ - \left(C^{n-1}_0 \right)' \cdot T^{(n -1) - \left(\alpha_2+ \ldots + \alpha_n \right)} \cdot \tau_0 \\
& \ \ - \sum_{j=1}^{n-1} \frac{\left(C^{n-1}_j \right)' \cdot T^{(n-1-j) - \left(\alpha_{j+2}+ \ldots + \alpha_{n} \right)} \cdot \tau_0^{(j+1) - \left(\alpha_{2}+ \ldots + \alpha_{j+1} \right)}}{(j+1) - \left(\alpha_2+ \ldots + \alpha_{j+1} \right)}.
\end{align*}

So we may write 
\begin{align*}
J_{n}\left[\tau_0, \alpha_2, \ldots,\alpha_{n}  \right] = & D^n_0 \cdot T^{n - \left(\alpha_{2}+ \ldots + \alpha_{n} \right)} 
+ D^n_1  \cdot  T^{(n-1) - \left(\alpha_{2}+ \ldots + \alpha_{n} \right)} \cdot \tau_0 \\
& \ \ + \sum_{j=2}^n D^n_j \cdot T^{(n-j) - \left(\alpha_{j+1}+ \ldots + \alpha_{n} \right)} \cdot \tau_0^{j - \left(\alpha_{2}+ \ldots + \alpha_{j} \right)}
\end{align*}
where
$$D^n_1 = - \left(C^{n-1}_0 \right)' = \frac{-1}{\prod_{k=1}^{n-1} \left(k - \left( \alpha_{2} + \ldots + \alpha_{k+1} \right) \right)}$$
and for $2 \leq j \leq n$,
\begin{align*}
& D^n_j = \frac{- \left( C^{n-1}_{j-1} \right)'}{j - \left(\alpha_{2}+ \ldots + \alpha_{j} \right)}\\ & = \frac{ \left(- 1 \right)^{j}}{ \left( \prod_{k=1}^{n-j} \left(k - \left( \alpha_{j+1} + \ldots + \alpha_{j+k} \right) \right)  \right) \left( \prod_{i=1}^{j-1} \left( i - \left( \alpha_{j+1-i} +\ldots + \alpha_{j} \right) \right)   \right) \left( {j - \left(\alpha_{2}+ \ldots + \alpha_{j} \right)}\right)}
\end{align*}
and 
\begin{align*}
& D^n_0 = \left(C^{n-1}_0 \right)'+\sum_{j=1}^{n-1} \frac{\left(C^{n-1}_j \right)'}{(j+1) - \left(\alpha_2+ \ldots + \alpha_{j+1} \right)}\\
& \hspace{-3cm} = \frac{1}{\prod_{k=1}^{n-1} \left(k - \left( \alpha_{2} + \ldots + \alpha_{k+1} \right) \right)}\\
& \hspace{-2.8cm} + \sum_{j=1}^{n-1} \frac{ \left(- 1 \right)^{j}}{ \left( \prod_{k=1}^{n-1-j} \left(k - \left( \alpha_{j+2} + \ldots + \alpha_{j+1+k} \right) \right)  \right) \left( \prod_{i=1}^j \left( i - \left( \alpha_{j+2-i} +\ldots + \alpha_{j+1} \right) \right) \right) \left( (j+1) - \left(\alpha_2+ \ldots + \alpha_{j+1} \right)   \right) }.\\ 
\end{align*}

Again, we need to establish that $C_{S_d}>0$ (in the notation of (\ref{equation:Theta}), reinterpreted to our new context) where $S_d$ is such that $S_d \subset H$ and $\delta \left( H/S_d \right) = d(H)$. This time, the term $C_{S_d}$ arises as a sum (over different permutations of $v_1, \ldots, v_n$) of constants of the form $$D_{j}^{n} = \frac{ \left(- 1 \right)^{j}}{ \left( \prod_{k=1}^{n-j} \delta\left(A_k /S_d \right)  \right)  \left( \prod_{i=0}^{j-1} \delta \left( S_d / B_i \right)   \right)}$$
for some $S_d \subset A_k \subseteq H$ and $B_i \subset S_d$, where $B_0=\emptyset$. Just as before, $\delta \left(A_k / S_d \right)>0$ and $\delta \left(S_d /B_i \right)<0$, meaning that $D^n_j>0$.

Again, in the rigid case the same reasoning gives that $D^n_n>0$, meaning that after summing over permutations $v_1, \ldots, v_n$ we have $C_H>0$.\\

This completes our analysis of integrals (\ref{equation:multint}) and (\ref{equation:multint3}), establishing that both satisfy condition (\ref{equation:Theta}).

\end{document}